\documentclass[11pt]{article}

\usepackage[T1]{fontenc}
\usepackage[utf8]{inputenc}
\usepackage{amsmath,amssymb,amsthm}
\usepackage{mathtools}
\usepackage[margin=1in]{geometry}
\usepackage{enumitem}

\numberwithin{equation}{section}

\newtheorem{theorem}{Theorem}[section]
\newtheorem{conjecture}[theorem]{Conjecture}
\newtheorem{lemma}[theorem]{Lemma}
\newtheorem{proposition}[theorem]{Proposition}
\newtheorem{remark}[theorem]{Remark}

\newcommand{\Zn}{\mathbb Z_n}
\newcommand{\Ehat}{\widehat E_q}

\title{The Type III realisation conjecture of Kirkland and \v{S}migoc}
\author{Brecht Verbeken$^{1,2}$ and Vincent Ginis$^{1,2,3}$\\[0.4em]
\small $^{1}$Department of Business Technology and Operations, Data Analytics Laboratory,\\
\small Vrije Universiteit Brussel (VUB), Pleinlaan 2, 1050 Brussels, Belgium\\
\small $^{2}$imec-SMIT, Vrije Universiteit Brussel, Pleinlaan 9, 1050 Brussels, Belgium\\
\small $^{3}$School of Engineering and Applied Sciences, Harvard University,\\
\small Cambridge, Massachusetts 02138, USA\\[0.4em]
\texttt{brecht.verbeken@vub.be}\quad
\texttt{vincent.ginis@vub.be}}
\date{}

\begin{document}
\maketitle

\begin{abstract}
Kirkland and \v{S}migoc constructed a family of stochastic matrices realising
the Type III boundary polynomials in the Karpelevi\v{c} region and conjectured
that, conversely, every stochastic realisation of such a polynomial must come
from their construction. We prove this conjecture for the full nonzero
parameter range $0<\alpha\le1$, for genuine Type III reduced Ito polynomials
of order $n$
\[
        f_\alpha(x)=x^y\bigl(x^q-(1-\alpha)\bigr)^d-\alpha^d,
        \qquad n=qd+y.
\]
For $0<\alpha<1$, the proof first reduces every realisation to a two-shift
cyclic normal form using the Dmitriev--Dynkin boundary theorem. The remaining
argument is finite and combinatorial: Coates' coefficient formula and the
equality case of a weighted Tur\'an theorem force the $q$-cycles associated
with the backward edges to split into $d$ complete multipartite classes of
equal total weight. A circular-arc telescoping argument then converts this
additive equality into the product condition required by Kirkland and
\v{S}migoc. The endpoint $\alpha=1$ is treated separately. We also explain
why the closed endpoint $\alpha=0$ is degenerate: the literal extension to
this endpoint fails, because reducible realisations with closed $q$-cycles and
transient states need not contain the global $n$-cycle.
\end{abstract}

\noindent\textbf{Keywords:} stochastic matrix, Karpelevi\v{c} region,
reduced Ito polynomial, nonnegative inverse eigenvalue problem, weighted
digraph, Tur\'an theorem

\medskip
\noindent\textbf{2020 MSC:} 15A18, 15B51, 05C50, 60J10

\section{Introduction}

Let $\Theta_n$ denote the set of all eigenvalues of all $n\times n$ stochastic
matrices. Karpelevi\v{c}'s theorem describes $\Theta_n$ \cite{Karpelevic},
and Ito's reformulation expresses the boundary arcs of $\Theta_n$ through
one-parameter families of polynomials associated with Farey-neighbour data
\cite{Ito}. Following the terminology used by Johnson--Paparella
\cite{JohnsonPaparella} and by Kirkland--\v{S}migoc \cite{KirklandSmigoc},
the resulting reduced Ito polynomials fall into Types 0, I, II, and III.
Write $\beta=1-\alpha$, with $0\le\alpha\le1$. A convenient list of reduced
forms is
\[
\begin{aligned}
\text{Type 0:}\quad
&f_\alpha(t)=(t-\beta)^n-\alpha^n,\\
\text{Type I:}\quad
&f_\alpha(t)=t^n-\beta t^{n-q}-\alpha,
\qquad q>\frac n2,\\
\text{Type II:}\quad
&f_\alpha(t)=(t^q-\beta)^d-\alpha^dt^z,
\qquad n=qd,\ 1\le z\le q-1,\ d\ge2,\\
\text{Type III:}\quad
&f_\alpha(t)=t^y(t^q-\beta)^d-\alpha^d,
\qquad n=qd+y,\ 1\le y\le q-1,\ d\ge2.
\end{aligned}
\]
For this paper, a \emph{genuine Type III reduced Ito polynomial of order
$n$} means a polynomial of the form
\[
        f_\alpha(x)=x^y(x^q-(1-\alpha))^d-\alpha^d,
        \qquad n=qd+y,
\]
arising from the Type III Farey-neighbour case in the Ito--Karpelevi\v{c}
parametrisation with associated open boundary arc of exact order $n$. Thus
the exact-order boundary condition is part of the word genuine below, not an
additional hypothesis. The proof uses this genuineness only to ensure that,
for $0<\alpha<1$, the relevant root lies on the open Karpelevi\v{c} boundary
arc and belongs to $\Theta_n\setminus\Theta_{n-1}$. The remaining finite
argument uses only
\[
        d\ge2,\qquad 1\le y\le q-1,\qquad \gcd(q,n)=1.
\]

Kirkland and \v{S}migoc initiated a structural realisation theory for these
boundary polynomials \cite{KirklandSmigoc}. They characterised all stochastic
matrices whose characteristic polynomials are of Type 0 or Type I, and
characterised the sparsest realisations for Type II and Type III.

Let
\[
        f_\alpha(x)=x^y(x^q-\beta)^d-\alpha^d,
        \qquad \beta=1-\alpha,
        \qquad n=qd+y,
\]
where $d\ge2$ and $1\le y\le q-1$. In the genuine Type III situation this
polynomial comes from a Farey pair with denominators $q$ and $n$, and hence
$\gcd(q,n)=1$.

Let $\Zn=\mathbb Z/n\mathbb Z$. On the vertex set $\Zn$, put
\[
        E_n=\{(i,i+1):i\in\Zn\},
        \qquad
        \Ehat=\{(i,i+1-q):i\in\Zn\}.
\]
More generally, write
\[
        T_b=\{(i,i+b):i\in\Zn\}.
\]
Let $C_n$ denote the cyclic permutation matrix with edges $E_n$; then
$C_n^b$ has edge set $T_b$.

For $i,j\in\Zn$, define
\[
        d_n(i,j)=\min\{[i-j]_n,[j-i]_n\},
\]
where $[r]_n$ is the least nonnegative residue of $r$ modulo $n$.

Kirkland and \v{S}migoc proved that any stochastic matrix satisfying the
graph-and-weight conditions recalled below has characteristic polynomial
$f_\alpha$ \cite[Proposition 7.1]{KirklandSmigoc}. They conjectured the
following converse to their Proposition 7.1.

\begin{conjecture}[Kirkland--\v{S}migoc, Conjecture 7.1]\label{conj:KS}
Let $A$ be a stochastic matrix of order $n$ with Type III characteristic
polynomial
\[
        f_\alpha(x)=x^y\bigl(x^q-(1-\alpha)\bigr)^d-\alpha^d,
        \qquad n=qd+y,
        \qquad 1\le y\le q-1,
        \qquad d\ge2.
\]
Then $A$ is permutation similar to a matrix whose directed graph $\Gamma$
satisfies the conditions of Proposition 7.1 in Kirkland and \v{S}migoc.
\end{conjecture}

We recall these conditions explicitly. The digraph has edge set
\[
        E(\Gamma)=E_n\cup\bigcup_{r=1}^d B_r,
        \qquad B_r\subseteq\Ehat,
\]
where
\[
        (i,i+1-q),(j,j+1-q)\in B_r
        \Longrightarrow d_n(i,j)<q, \tag{P1}
\]
\[
        (i,i+1-q)\in B_r,\ (j,j+1-q)\in B_s,\ r\ne s
        \Longrightarrow d_n(i,j)\ge q. \tag{P2}
\]
Moreover, if $u_i$ denotes the weight of the forward edge $(i,i+1)$, then
\[
        \prod_{\{i:(i,i+1-q)\in B_r\}}u_i=\alpha,
        \qquad r=1,\ldots,d. \tag{P3}
\]

The main result of this paper is the converse for $0<\alpha\le1$. The
non-degenerate case is $0<\alpha<1$, and the endpoint $\alpha=1$ is dealt with
separately.

\begin{theorem}[Main theorem]\label{thm:main}
Let
\[
        f_\alpha(x)=x^y\bigl(x^q-(1-\alpha)\bigr)^d-\alpha^d,
        \qquad n=qd+y,
\]
be a genuine Type III reduced Ito polynomial of order $n$, so that $d\ge2$,
$1\le y\le q-1$, and $\gcd(q,n)=1$. If $0<\alpha\le1$ and $A$ is an
$n\times n$ stochastic matrix with characteristic polynomial $f_\alpha$, then
$A$ is permutation similar to a matrix whose directed graph satisfies
(P1)--(P3).
\end{theorem}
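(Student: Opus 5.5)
Since the abstract lays out the route, I would follow it and treat $0<\alpha<1$ first.

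\emph{Step 1: cyclic normal form.} Let $\lambda$ be a root of $f_\alpha$ on the open Karpelevi\v{c} arc; genuineness gives $\lambda\in\Theta_n\setminus\Theta_{n-1}$. I would invoke the Dmitriev--Dynkin boundary theorem. Its consequence is that a stochastic matrix with a boundary eigenvalue of exact order $n$ is irreducible and its digraph contains a Hamiltonian $n$-cycle. After a permutation similarity this cycle becomes $E_n$, with forward weights $u_i>0$.

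Next I would use the characteristic polynomial. It has only the coefficients of $x^n$, $x^{n-q}, x^{n-2q},\dots$ and the constant term. By Coates' formula, each coefficient is a signed sum over linear subdigraphs, so every cycle of $\Gamma(A)$ must have length in $\{q,2q,\dots,dq\}\cup\{n\}$. Combined with $\gcd(q,n)=1$ and the presence of $E_n$, every non-forward edge $(i,j)$ closes a cycle with a forward arc of length $[i-j]_n+1$. This should force each extra edge to lie in $T_{1-q}=\Ehat$, or in a second shift $T_b$ that can only add to $n$-cycles. This is the two-shift normal form.

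The further weight identities for the second shift come from row sums together with the constant term $-\alpha^d$. They should show that the weights on the second shift vanish, or that this shift produces no contribution beyond the global $n$-cycle.

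\emph{Step 2: the combinatorial core.} Each backward edge $(i,i+1-q)$ of weight $w_i$ closes a $q$-cycle $Q_i$ with weight $c_i=w_i\prod_{k=i-q+1}^{i-1}u_k$. Two such cycles are vertex-disjoint exactly when $d_n(i,j)\ge q$.

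Consider the conflict graph $G$ on the backward edges, with an edge when two cycles intersect. By Coates, the coefficient of $x^{n-kq}$ is $(-1)^k e_k$, where $e_k$ is the weighted count of independent $k$-sets in $G$. Matching against $(x^q-\beta)^d$ gives $e_k=\binom dk\beta^k$ for $k\le d$. Since $y<q$, no independent set of size $d+1$ exists. The constant term identifies the weight of the $n$-cycle plus mixed terms with $\alpha^d$.

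From $e_1=d\beta$ and $e_d=\beta^d$, weighted Tur\'an (Motzkin--Straus type) on the complement gives $e_d\le (e_1/d)^d$, with equality only if the complement of $G$ is complete $d$-partite with equal class weights. So $G$ is a disjoint union of $d$ cliques $B_1,\dots,B_d$, each of total $q$-cycle weight $\beta$. This yields (P1) and (P2), and I expect the equality analysis here to be the main obstacle. In particular, zero-weight cycles and the circular geometry must be handled so that intersection classes really are cliques in the $d_n$ sense.

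\emph{Step 3: from additive to multiplicative.} Within a class $B_r$, the backward edges occupy a circular arc of length $<q$. Row-stochasticity at vertex $i$ gives $w_i=1-u_i$, so
\[
\sum_{i\in B_r} c_i=\sum_{i\in B_r}(1-u_i)\prod_{k=i-q+1}^{i-1}u_k .
\]
Ordering the indices along the arc, this sum telescopes to
\[
\prod_{\text{window}}u-\prod_{\text{window}}u\cdot\prod_{i\in B_r}u_i,
\]
the difference of two products. Together with the constraint forcing the outer window product to be $1$ (the vertices off $B_r$ carry no backward edge, so their $u$'s equal $1$), the condition $\sum c_i=\beta=1-\alpha$ becomes $\prod_{i\in B_r}u_i=\alpha$, which is (P3).

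\emph{Step 4: the endpoint $\alpha=1$.} Here $\beta=0$, so the polynomial is $x^n-1$. A stochastic matrix with this spectrum is a permutation matrix of an $n$-cycle. Hence all backward weights vanish, and $B_r=\emptyset$ satisfies the conditions vacuously, or after interpreting (P3) with $u_i=1$ as in the paper's convention.
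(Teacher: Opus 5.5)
The main gap is Step 1. You use the Dmitriev--Dynkin theorem only to obtain a Hamiltonian cycle. Its actual content, as quoted by Kirkland--\v{S}migoc, is that after a permutation similarity the support lies in two consecutive shifts $T_a\cup T_{a+1}$, and this is exactly what is needed. From $E_n\subseteq E(\Gamma)$ and the cycle-length restriction alone, you only get the following: a non-forward edge $(i,j)$ closes a cycle of length $[i-j]_n+1<n$, which must equal $kq$, so $E(\Gamma)\subseteq E_n\cup\bigcup_{k=1}^d T_{1-kq}$. There is no ``second shift that can only add to $n$-cycles'', since every non-forward edge closes a short cycle. Your appeal to row sums and the constant term $-\alpha^d$ is not an argument that removes edges in $T_{1-kq}$, $k\ge2$, yet these must be excluded.

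The paper does this as follows. With support in $T_a\cup T_{a+1}$, an $n$-cycle has displacement $na+b\equiv b$, so it uses a single shift, which is relabelled to $E_n$. The other shift $r$ then closes a cycle of length $\kappa q$. A $q$-cycle using $b$ edges of shift $r$ forces $n\mid b\kappa-1$, which is impossible for $\kappa\ge2$ because $1\le b\kappa-1\le qd-1<n$. Within your Coates framework you could instead kill $T_{1-2q}$-edges through $R=0$ at the coefficient of $x^{n-2q}$ and then induct on $k$, but that too has to be carried out.

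Step 2 has a related hole. The identity ``coefficient of $x^{n-kq}$ equals $(-1)^ke_k$'' ignores linear subdigraphs containing cycles of length $jq$, $j\ge2$, which the cycle-length restriction allows. For $k\ge3$ these enter with both signs, so you cannot read off $e_d=\beta^d$ without first proving such cycles absent. You would also need the equality case of the $K_d$-Lagrangian bound $e_d\le(e_1/d)^d$, which is not the Motzkin--Straus statement and is not proved.

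The paper avoids both issues by using only the coefficients of $x^{n-q}$ and $x^{n-2q}$. The only extra term at $x^{n-2q}$ is $-R$ with $R\ge0$, so $M\ge\binom d2\beta^2$, while Motzkin--Straus gives the reverse inequality. Equality then yields $R=0$ together with the complete $d$-partite structure with class weights $\beta$.

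Two smaller points. In Step 3, pairwise $d_n<q$ does not by itself place a class inside a short arc. For $n=2q+1$ and $q\ge4$, the points $0,q-1,2q-2$ are pairwise at distance $<q$, yet no $q$ consecutive vertices contain all three. You need, as the paper does, the $d-1$ disjoint intervals from the other classes to cut the circle down to a piece of at most $q+y<2q$ vertices. In Step 4, the claim that a stochastic matrix with characteristic polynomial $x^n-1$ is an $n$-cycle permutation matrix needs an argument, for example Hadamard's inequality with $|\det A|=1$.
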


Thus Conjecture \ref{conj:KS} is true for $0<\alpha\le1$. The endpoint
$\alpha=0$ is exceptional; see Section \ref{sec:endpoints}. The obstruction is
reducibility: at $\alpha=0$ the polynomial can be realised by $d$ closed
$q$-cycles together with $y$ transient states, so the characteristic polynomial
no longer forces the global $n$-cycle present on the open Type III arc.
Consequently, the statement should either be read as an open-arc assertion, or
supplemented at $\alpha=0$ by an additional exact-order/irreducibility
hypothesis.

\section{Preliminaries}

\subsection{Weighted digraphs and Coates' formula}

A weighted digraph $\Gamma=(V,E,w)$ has positive weights on its edges. Its
adjacency matrix $A=(a_{ij})$ is defined by $a_{ij}=w(i,j)$ if $(i,j)\in E$
and $a_{ij}=0$ otherwise. Directed cycles are allowed to have length one; thus
a loop is regarded as a directed $1$-cycle. A linear digraph is a
vertex-disjoint union of directed cycles. If $L$ is a linear digraph, let
$c(L)$ denote its number of cycles and let
\[
        \pi(L)=\prod_{e\in E(L)}w(e)
\]
be its weight.

We use the following coefficient form of Coates' determinant formula.

\begin{lemma}[Coates linear-digraph formula]
Let $A$ be the adjacency matrix of a weighted digraph $\Gamma$ on $n$
vertices, and write
\[
        \det(xI_n-A)=x^n+k_1x^{n-1}+\cdots+k_n.
\]
If $\mathcal L_m$ denotes the set of linear digraphs of $\Gamma$ whose total
number of vertices is $m$, then
\[
        k_m=\sum_{L\in\mathcal L_m}(-1)^{c(L)}\pi(L).
\]
\end{lemma}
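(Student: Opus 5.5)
The plan is to derive the formula from the Leibniz expansion of $\det(xI_n-A)$, organised through principal minors. First I will recall the classical identity
\[
        \det(xI_n-A)=\sum_{m=0}^{n}(-1)^m\Bigl(\sum_{|S|=m}\det A[S]\Bigr)x^{n-m},
\]
where $A[S]$ is the principal submatrix of $A$ on the rows and columns indexed by $S\subseteq\{1,\dots,n\}$. To see it, expand the determinant multilinearly column by column. Each column of $xI_n-A$ is $x e_j-a_{\ast j}$, so we choose either the term $x e_j$ or the term $-a_{\ast j}$. Let $S$ be the set of columns where $-a_{\ast j}$ is chosen. Columns outside $S$ contribute unit vectors, and Laplace expansion along those columns leaves $x^{n-|S|}\det(-A[S])=(-1)^{|S|}x^{n-|S|}\det A[S]$. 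Hence
\[
        k_m=(-1)^m\sum_{|S|=m}\det A[S].
\]

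Next I will fix $S$ with $|S|=m$ and expand $\det A[S]=\sum_{\sigma}\operatorname{sgn}(\sigma)\prod_{i\in S}a_{i\sigma(i)}$, where $\sigma$ runs over the permutations of $S$. A term is nonzero exactly when $(i,\sigma(i))\in E$ for every $i\in S$. Such a $\sigma$ determines the subgraph $L_\sigma$ with edges $\{(i,\sigma(i)):i\in S\}$. Its components are the cycles of $\sigma$, with fixed points giving loops, which the paper counts as $1$-cycles. So $L_\sigma$ is a linear digraph on the vertex set $S$, and its weight is $\pi(L_\sigma)=\prod_{i\in S}a_{i\sigma(i)}$.

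Conversely, every linear digraph $L$ with vertex set $S$ defines a permutation of $S$, sending each vertex to its out-neighbour in $L$. This gives a bijection between the nonzero terms and $\{L\in\mathcal L_m: V(L)=S\}$. Taking the union over all $S$ of size $m$ recovers $\mathcal L_m$ exactly, since a linear digraph is determined together with its vertex set.

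Finally I will use the sign of a permutation in terms of its cycles. A cycle of length $\ell$ is a product of $\ell-1$ transpositions, so $\operatorname{sgn}(\sigma)=(-1)^{m-c(L_\sigma)}$. Combining the steps gives
\[
        k_m=(-1)^m\sum_{L\in\mathcal L_m}(-1)^{m-c(L)}\pi(L)=\sum_{L\in\mathcal L_m}(-1)^{c(L)}\pi(L),
\]
as claimed.

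There is no real obstacle here. The only point needing care is bookkeeping: loops must be treated as $1$-cycles so that fixed points of $\sigma$ are counted in $c(L)$. Everything else is the standard principal-minor expansion.
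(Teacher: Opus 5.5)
Your proof is correct: the principal-minor expansion $k_m=(-1)^m\sum_{|S|=m}\det A[S]$, the bijection between nonzero Leibniz terms of $\det A[S]$ and linear digraphs on $S$ (with loops as $1$-cycles), and $\operatorname{sgn}(\sigma)=(-1)^{m-c(L_\sigma)}$ combine to give exactly $k_m=\sum_{L\in\mathcal L_m}(-1)^{c(L)}\pi(L)$. The paper gives no proof of this lemma and simply quotes it as the standard coefficient form of Coates' formula; your argument is the classical derivation behind that citation.
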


\subsection{The exact Type III boundary input and the two-shift normal form}

We isolate the only part of the argument that uses the global theory of the
Karpelevi\v{c} region.

Let
\[
        f_\alpha(x)=x^y(x^q-\beta)^d-\alpha^d,
        \qquad \beta=1-\alpha,
        \qquad n=qd+y,
\]
where $d\ge2$, $1\le y\le q-1$, and the parameters come from a Type III
Farey-neighbour arc in the genuine sense fixed above. We shall use this
genuineness in the following sense: for $0<\alpha<1$, one of the roots
$\lambda$ of $f_\alpha$ lies on the open Karpelevi\v{c} boundary arc
associated with the Type III Farey-neighbour data and satisfies
\[
        \lambda\in\Theta_n\setminus\Theta_{n-1}.
\]
This is the only analytic input from the Karpelevi\v{c}--Ito parametrisation.
The remaining argument is finite and combinatorial.

Kirkland and \v{S}migoc recall Ito's description of the boundary arcs and the
reduced Ito polynomial types; in their Type III case the reduced polynomial
has the form
\[
        f_\alpha(t)=t^y(t^q-\beta)^d-\alpha^d,
        \qquad n=s=qd+y,
        \qquad 1\le y\le q-1,
        \qquad d\ge2.
\]
They also explain that the $n$-dimensional cases are the relevant ones for
excluding $\Theta_{n-1}$.

We shall use the following two results from Kirkland--\v{S}migoc.

First, their Theorem 3.1, attributed to Dmitriev and Dynkin, says that if
\[
        \lambda\in\Theta_n\setminus\Theta_{n-1},
        \qquad
        \frac{2\pi k}{n}\le \arg\lambda\le\frac{2\pi(k+1)}{n},
\]
then every $n\times n$ stochastic matrix having $\lambda$ as an eigenvalue is
permutation similar to a matrix whose nonzero entries lie only in two
consecutive cyclic positions. In the notation
$T_b=\{(i,i+b):i\in\Zn\}$, this says
\[
        E(\Gamma)\subseteq T_a\cup T_{a+1}
\]
for some $a\in\Zn$. This is just their one-based statement translated into
zero-based cyclic-shift notation.

Second, their Proposition 3.1 says that if
$\lambda\in\Theta_n\setminus\Theta_{n-1}$ is a root of a reduced Ito
polynomial with parameters $q,s,d$, then the digraph of any stochastic
realisation contains at least one $s$-cycle, at least one $q$-cycle, and no
cycles of length different from $s$ and $kq$, $1\le k\le d$. For the Type III
case used here, $s=n$. Hence the digraph contains an $n$-cycle and a
$q$-cycle, and every directed cycle has length either $n$ or $kq$,
$1\le k\le d$.

Now we derive the normal form needed below.

\begin{lemma}[Two-shift normal form]\label{lem:two-shift}
Let
\[
        f_\alpha(x)=x^y(x^q-\beta)^d-\alpha^d,
        \qquad \beta=1-\alpha,
        \qquad n=qd+y,
\]
be a genuine Type III reduced Ito polynomial of order $n$, with
\[
        d\ge2,
        \qquad 1\le y\le q-1,
        \qquad \gcd(q,n)=1.
\]
Assume $0<\alpha<1$, and let $A$ be an $n\times n$ stochastic matrix with
characteristic polynomial $f_\alpha$. Then, after permutation similarity, the
weighted digraph $\Gamma$ of $A$ satisfies
\[
        E_n\subseteq E(\Gamma)\subseteq E_n\cup\Ehat,
\]
where
\[
        E_n=\{(i,i+1):i\in\Zn\},
        \qquad
        \Ehat=\{(i,i+1-q):i\in\Zn\}.
\]
Equivalently,
\[
        A=DC_n+(I_n-D)C_n^{\,1-q},
        \qquad D=\operatorname{diag}(u_i)_{i\in\Zn},
        \qquad
        0<u_i\le1 \qquad (i\in\Zn).
\]
Moreover every directed cycle of $\Gamma$ has length either $n$ or $kq$ for
some $1\le k\le d$.
\end{lemma}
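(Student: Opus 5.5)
The plan is to start from the Dmitriev--Dynkin theorem, which gives $E(\Gamma)\subseteq T_a\cup T_{a+1}$, and then relabel $\Zn$ by a unit multiplier so that the two shifts become exactly $1$ and $1-q$. The permutations $i\mapsto ci$ with $\gcd(c,n)=1$ carry $T_b$ to $T_{cb}$, so they are legitimate permutation similarities. To apply Dmitriev--Dynkin we need a root $\lambda\in\Theta_n\setminus\Theta_{n-1}$, which genuineness supplies for $0<\alpha<1$. Kirkland--\v{S}migoc Proposition 3.1 then says that $\Gamma$ contains an $n$-cycle and a $q$-cycle, and that every cycle has length $n$ or $kq$ with $1\le k\le d$. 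The last sentence of the lemma is therefore immediate, and it is invariant under relabelling.

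\emph{Step 1: the $n$-cycle is a pure shift.} A closed walk that uses $l$ steps of $a$ and $m$ steps of $a+1$ has $l+m$ equal to its length and satisfies $la+m(a+1)\equiv0 \pmod n$. For a Hamiltonian cycle $l+m=n$, so this reduces to $m\equiv0 \pmod n$, whence $m\in\{0,n\}$. Thus the $n$-cycle lies entirely in one shift class $T_b$, with $b\in\{a,a+1\}$ and $\gcd(b,n)=1$. Multiplying labels by $b^{-1}$ turns this shift into $1$ and the other shift into $s=1+t$, where $t\equiv\pm b^{-1}$ and $t\not\equiv0$. Since the $n$-cycle uses every edge $(i,i+1)$, all forward weights satisfy $u_i>0$. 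By stochasticity row $i$ is then $u_i$ at $i+1$ and $1-u_i$ at $i+s$, so $A=DC_n+(I-D)C_n^{s}$ with $0<u_i\le1$. Some $u_i<1$, for otherwise $A=C_n$ and the characteristic polynomial would be $x^n-1\ne f_\alpha$ (here $\alpha<1$ is used).

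\emph{Step 2: pinning down $t$.} Take an index $i$ with $u_i<1$. The edge $(i,i+s)$ followed by the forward path from $i+s$ back to $i$ is a simple cycle of length $n-[t]_n<n$, where $[t]_n$ denotes the least nonnegative residue as in the paper. By the cycle-length restriction this length equals $kq$ with $1\le k\le d$, so $t\equiv -kq \pmod n$. Next consider the guaranteed $q$-cycle. It cannot consist of forward edges only, since those close up only after $n$ steps. So it uses $j$ backward edges and $l$ forward edges, with $l+j=q$ and $1\le j\le q$. The closing condition is $l+j(1+t)\equiv0$, i.e.\ $q+jt\equiv0$. Substituting $t\equiv-kq$ gives $q(1-jk)\equiv0$, and since $\gcd(q,n)=1$ this means $jk\equiv1 \pmod n$. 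Because $1\le jk\le qd<n$, we get $jk=1$, hence $j=k=1$. Therefore $t\equiv-q$ and $s\equiv1-q$, which gives $E_n\subseteq E(\Gamma)\subseteq E_n\cup\Ehat$ together with the stated matrix form.

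The main obstacle is Step 2: identifying the second shift, as opposed to merely knowing that it exists. The key ideas are that a single backward edge closes a cycle of length $n-[t]_n$, and that $\gcd(q,n)=1$ together with the bound $qd<n$ forces that length to be exactly $dq$. Some care is needed to check that all the cycles invoked are genuinely simple, so that Proposition 3.1 applies, and that the case where the $n$-cycle lives in $T_{a+1}$ rather than $T_a$ is handled by the same relabelling.
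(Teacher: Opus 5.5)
Your proof is correct and follows essentially the same route as the paper: Dmitriev--Dynkin together with Kirkland--\v{S}migoc Proposition 3.1, the displacement count forcing the $n$-cycle into a single shift, a unit-multiplier relabelling, and the closing condition $q(1-jk)\equiv 0 \pmod n$ with $\gcd(q,n)=1$ and $qd<n$ to pin the second shift to $1-q$. The only difference is that the paper takes its backward edge from the guaranteed $q$-cycle rather than from $f_\alpha\ne x^n-1$, which is immaterial. One slip: in your closing remark the single-backward-edge cycle is forced to have length $q$, not $dq$, exactly as your Step 2 conclusion $k=1$ shows.
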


\begin{proof}
Choose the boundary root $\lambda$ of $f_\alpha$ lying on the open Type III
Karpelevi\v{c} arc.
Since
\[
        f_\alpha(0)=-\alpha^d\ne0
\]
for $0<\alpha<1$, this root is nonzero and $\arg\lambda$ is defined. By
genuineness of the Type III data,
\[
        \lambda\in\Theta_n\setminus\Theta_{n-1}.
\]
Choose $a\in\Zn$ such that
\[
        \frac{2\pi a}{n}\le \arg\lambda\le\frac{2\pi(a+1)}{n}.
\]
By the Dmitriev--Dynkin theorem as stated by Kirkland--\v{S}migoc, after
permutation similarity,
\[
        E(\Gamma)\subseteq T_a\cup T_{a+1}.
\]

By Kirkland--\v{S}migoc Proposition 3.1, in the Type III case $s=n$, so
$\Gamma$ contains an $n$-cycle and a $q$-cycle, and every cycle length is
either $n$ or $kq$, $1\le k\le d$.

Let $C$ be an $n$-cycle of $\Gamma$. Suppose $C$ uses $b$ edges of type
$T_{a+1}$ and $n-b$ edges of type $T_a$. The total displacement of $C$ modulo
$n$ is
\[
        (n-b)a+b(a+1)=na+b\equiv b\pmod n.
\]
Since $C$ is closed, $b\equiv0\pmod n$. Hence $b=0$ or $b=n$. Thus the
$n$-cycle uses only one of the two allowed shifts. Denote that shift by $h$.
Since the cycle has length $n$, the step $h$ has order $n$ in $\Zn$, and the
cycle contains every edge of $T_h$.

Relabel the vertices by the affine permutation $i\mapsto h^{-1}i$. Then
$T_h$ becomes $T_1=E_n$. The other allowed shift becomes some fixed shift
$T_r$, with $r\ne1$. Therefore
\[
        E_n\subseteq E(\Gamma)\subseteq E_n\cup T_r.
\]

It remains to identify $r$. Since $\Gamma$ contains a $q$-cycle and $q<n$,
some edge of $T_r$ must occur. Fix such an edge $(i,i+r)$. Together with the
forward path in $E_n$ from $i+r$ back to $i$, this edge forms a directed cycle
of length
\[
        \ell=1+[-r]_n,
\]
where $[-r]_n$ is the least nonnegative residue of $-r$ modulo $n$. Because
$r\ne1$, this cycle has length $\ell<n$.
By the cycle-length restriction, this cycle has length
\[
        \ell=\kappa q
\]
for some $1\le \kappa\le d$.

We claim that $\kappa=1$. Suppose instead that $\kappa\ge2$. Let $C_q$ be a
$q$-cycle of $\Gamma$, and let $b$ be the number of $T_r$-edges in $C_q$.
Since $q<n$, this cycle cannot consist only of forward edges, so
$1\le b\le q$. Its displacement modulo $n$ is
\[
        (q-b)\cdot1+br=q+b(r-1).
\]
Because $\ell=1+[-r]_n$, we have $r-1\equiv-\ell\pmod n$. Hence
\[
        q+b(r-1)\equiv q-b\ell=q(1-b\kappa)\pmod n.
\]
The cycle is closed, so
\[
        n\mid q(1-b\kappa).
\]
Since $\gcd(q,n)=1$,
\[
        n\mid 1-b\kappa.
\]
Equivalently,
\[
        n\mid b\kappa-1.
\]
But
\[
        1\le b\kappa-1\le qd-1=n-y-1<n,
\]
because $1\le b\le q$, $2\le\kappa\le d$, and $y\ge1$. This is impossible.
Hence $\kappa=1$, so
\[
        \ell=q.
\]
Therefore
\[
        1+[-r]_n=q,
\]
i.e.
\[
        [-r]_n=q-1,
        \qquad\text{so}\qquad
        r\equiv1-q\pmod n.
\]
Thus $T_r=\Ehat$, and
\[
        E_n\subseteq E(\Gamma)\subseteq E_n\cup\Ehat.
\]

Finally, since $A$ is stochastic and each row has support contained in the
two positions $i+1$ and $i+1-q$, write
\[
        u_i=a_{i,i+1}.
\]
The inclusion $E_n\subseteq E(\Gamma)$ gives $u_i>0$, while stochasticity
gives
\[
        a_{i,i+1-q}=1-u_i\ge0.
\]
Hence $0<u_i\le1$, and
\[
        A=DC_n+(I_n-D)C_n^{\,1-q},
        \qquad D=\operatorname{diag}(u_i).
\]
The cycle-length assertion is exactly the Type III specialization of
Kirkland--\v{S}migoc Proposition 3.1.
\end{proof}

\begin{remark}
Lemma \ref{lem:two-shift} is the only point where the global boundary theory
of Karpelevi\v{c} and Dmitriev--Dynkin enters the proof. Everything after it
is a finite weighted-digraph argument.
\end{remark}

\subsection{Weighted Tur\'an equality}

We need the equality case of the weighted Tur\'an theorem. The inequality
follows from the Motzkin--Straus theorem; we include the equality argument.

\begin{lemma}[Weighted Tur\'an theorem]\label{lem:turan}
Let $G=(V,E)$ be a finite graph with positive vertex weights $w_v>0$.
Let $\omega(G)$ be the clique number of $G$, and put
\[
        W=\sum_{v\in V} w_v,
        \qquad
        M(G,w)=\sum_{\{u,v\}\in E} w_u w_v.
\]
If $\omega(G)\le d$, then
\[
        M(G,w)\le \frac{d-1}{2d}W^2.
\]
Equality holds if and only if $V$ admits a partition
\[
        V=V_1\sqcup\cdots\sqcup V_d
\]
such that $G$ is the complete $d$-partite graph with parts
$V_1,\ldots,V_d$, and
\[
        \sum_{v\in V_1}w_v=\cdots=\sum_{v\in V_d}w_v=\frac Wd.
\]
\end{lemma}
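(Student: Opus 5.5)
The inequality is a weighted Motzkin--Straus statement, so I will derive it from Motzkin--Straus and then prove the equality case by Zykov symmetrisation. Normalise $x_v=w_v/W$, so $x$ lies in the simplex $\Delta_V$ and $M(G,w)=W^2\sum_{\{u,v\}\in E}x_ux_v$. Motzkin--Straus gives
\[
\max_{x\in\Delta_V}\sum_{\{u,v\}\in E}x_ux_v=\tfrac12\Bigl(1-\tfrac1{\omega(G)}\Bigr)\le\tfrac{d-1}{2d},
\]
and this is the inequality. The reverse implication of the equality case is a direct computation. If $G$ is complete $d$-partite with part weights $W/d$, then $M=\sum_{r<s}(W/d)^2=\binom d2W^2/d^2=\frac{d-1}{2d}W^2$.

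For the forward implication, assume equality. Then $x$ is a maximiser of the Lagrangian $g(x)=\sum_{E}x_ux_v$ on $\Delta_V$, and every coordinate of $x$ is strictly positive.

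The first step is to show that non-adjacency is an equivalence relation. Suppose $u,v$ are non-adjacent. Moving mass $t$ from $v$ to $u$ changes $g$ linearly in $t$, because the $x_ux_v$ term is absent. An interior maximum then forces $\partial_ug=\partial_vg$, and $g$ is constant along the whole segment. Push all of $v$'s weight onto $u$ to get a new maximiser supported on $V\setminus\{v\}$.

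Now suppose non-adjacency is not transitive. Then there are vertices $u,v,z$ with $uv,vz\notin E$ but $uz\in E$. Consider the perturbation that moves $\varepsilon$ from $v$ to $u$ and $\varepsilon$ from $v$ to $z$. Its first-order term vanishes, since all partial derivatives are equal at an interior critical point. Its second-order term is $+\varepsilon^2$ from the edge $uz$, minus nothing, because $v$ is adjacent to neither $u$ nor $z$. This strictly increases $g$, contradicting maximality.

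So non-adjacency is transitive, and $G$ is complete multipartite with some parts $V_1,\dots,V_k$. The clique number is $k$, so $k\le d$.

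The second step computes $g$ on this structure. With part masses $s_r=\sum_{v\in V_r}x_v$ we have
\[
g=\sum_{r<s}s_rs_s=\tfrac12\Bigl(1-\sum_r s_r^2\Bigr)\le\tfrac12\Bigl(1-\tfrac1k\Bigr),
\]
using $\sum_r s_r^2\ge 1/k$ by Cauchy--Schwarz, with equality only when all $s_r$ are equal. Equality with $\frac{d-1}{2d}$ then forces $k=d$ and $s_r=1/d$. Undoing the normalisation gives part weights $W/d$.

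The one place needing care is the second-order perturbation argument. If I wanted to avoid relying on the interior-critical-point claim from Motzkin--Straus, I would instead compute the exact change directly: moving $\varepsilon$ from $v$ to each of $u$ and $z$ changes $g$ by
\[
\varepsilon\bigl(\partial_ug+\partial_zg-2\partial_vg\bigr)+\varepsilon^2.
\]
The linear coefficient is zero by the KKT conditions, which hold because every $x_v>0$. So this route also works, and I would present it that way.
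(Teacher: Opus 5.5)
Your proof is correct, but it reaches the equality structure by a different route from the paper.

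Both arguments take the inequality from Motzkin--Straus. Both also use that, because every $x_v>0$, the normalised weight vector is a maximiser in the relative interior of the simplex, so all partial derivatives of $g$ coincide.

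The paper then stays entirely at first order:
\begin{itemize}
\item it identifies the common value of the derivatives as $2g(x)=1-1/m$, with $m=\omega(G)=d$;
\item it fixes a maximum clique $c_1,\dots,c_m$ and shows by counting that every vertex fails to be adjacent to exactly one $c_r$;
\item it takes $V_r$ to be the non-neighbourhood of $c_r$;
\item it gets independence of each $V_r$ from maximality of the clique, and both completeness between parts and the masses $1/m$ from the fact that every vertex has neighbourhood mass exactly $1-1/m$.
\end{itemize}

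You instead use the second-order perturbation $\varepsilon(e_u+e_z-2e_v)$. Its exact effect is $+\varepsilon^2$ when $uv,vz\notin E$ and $uz\in E$, so non-adjacency is transitive and $G$ is complete $k$-partite directly. The formula $g=\tfrac12(1-\sum_r s_r^2)$ and Cauchy--Schwarz then force $k=d$ and $s_r=1/d$.

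Your version is shorter and needs neither the value of the common derivative nor a maximum clique. It also shows more: a full-support maximiser forces complete multipartite structure regardless of the value of the maximum. This is the classical step in proofs of Motzkin--Straus itself. The paper's version avoids any second-order computation and produces the parts explicitly, indexed by a maximum clique.

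Your preliminary remark about pushing all of $v$'s mass onto $u$ is never used and can be dropped.
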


\begin{proof}
Set $p_v=w_v/W$. Then $p_v>0$ and $\sum_v p_v=1$. Write
\[
        F(p)=\sum_{\{u,v\}\in E}p_up_v.
\]
By the Motzkin--Straus theorem \cite{MotzkinStraus},
\[
        F(p)\le \frac12\left(1-\frac1{\omega(G)}\right).
\]
Since $\omega(G)\le d$, this gives
\[
        F(p)\le \frac12\left(1-\frac1d\right),
\]
and multiplying by $W^2$ proves the inequality.

Suppose equality holds. Put $m=\omega(G)$. Equality in the two preceding
inequalities forces $m=d$, and $p$ is a maximizer in the Motzkin--Straus
problem for $G$.

For $v\in V$, define
\[
        N_v=\sum_{\{v,z\}\in E}p_z.
\]
Since all $p_v$ are positive, $p$ lies in the relative interior of the
probability simplex. Hence, for any $u,v\in V$, the variation
$p+t(e_u-e_v)$ is feasible for all sufficiently small $t$, and maximality of
$p$ gives
\[
        N_u=N_v.
\]
Thus $N_v=\gamma$ is independent of $v$. Since
\[
        \sum_{v\in V}p_vN_v
        =2\sum_{\{u,v\}\in E}p_up_v
        =2F(p),
\]
we have
\[
        \gamma=2F(p)=1-\frac1m.
\]

Let $C=\{c_1,\ldots,c_m\}$ be a clique of size $m$. Then
\[
        m\gamma
        =\sum_{r=1}^m N_{c_r}
        =\sum_{v\in V}p_v\,|\{r:\{v,c_r\}\in E\}|.
\]
Every vertex is adjacent to at most $m-1$ vertices of $C$, since adjacency to
all of $C$ would create a clique of size $m+1$. Therefore
\[
        m\gamma\le (m-1)\sum_{v\in V}p_v=m-1.
\]
But $\gamma=1-1/m$, so equality holds. Since every $p_v$ is positive, each
vertex is adjacent to exactly $m-1$ vertices of $C$.

Define
\[
        V_r=\{v\in V:\{v,c_r\}\notin E\},\qquad r=1,\ldots,m.
\]
These sets form a partition of $V$, and $c_r\in V_r$. Put
\[
        P_r=\sum_{v\in V_r}p_v.
\]
Because $c_r$ is adjacent precisely to the vertices outside $V_r$,
\[
        N_{c_r}=1-P_r.
\]
Since $N_{c_r}=1-1/m$, we get
\[
        P_r=\frac1m,\qquad r=1,\ldots,m.
\]

There are no edges inside any $V_r$. Indeed, if distinct $x,y\in V_r$ were
adjacent, then
\[
        \{x,y\}\cup (C\setminus\{c_r\})
\]
would be a clique of size $m+1$, a contradiction.

Finally, every edge between different parts is present. Let $x\in V_r$ and
$y\in V_s$ with $r\ne s$. If $\{x,y\}\notin E$, then, since there are no
edges inside $V_r$ and $p_y>0$,
\[
        N_x<1-P_r=1-\frac1m,
\]
contradicting $N_x=\gamma=1-1/m$. Hence all cross-edges are present.

Thus $G$ is complete $m$-partite with part masses $P_r=1/m$. Since $m=d$,
this is exactly the stated equality structure, after multiplying the part
masses by $W$.

Conversely, if $G$ is complete $d$-partite with part weights
\[
        W_r=\sum_{v\in V_r}w_v=\frac Wd,
\]
then
\[
        \begin{aligned}
        M(G,w)
        &=\sum_{1\le r<s\le d}W_rW_s\\
        &=\frac12\left(W^2-\sum_{r=1}^d W_r^2\right)\\
        &=\frac12\left(W^2-d\frac{W^2}{d^2}\right)\\
        &=\frac{d-1}{2d}W^2.
        \end{aligned}
\]
This proves the equality characterization.
\end{proof}

\section{The proof for $0<\alpha<1$}

Throughout this section assume
\[
        0<\alpha<1,\qquad \beta=1-\alpha,\qquad n=qd+y,
        \qquad 1\le y\le q-1,\qquad d\ge2,\qquad \gcd(q,n)=1.
\]
Let $A$ be an $n\times n$ stochastic matrix with characteristic polynomial
\[
        f_\alpha(x)=x^y(x^q-\beta)^d-\alpha^d.
\]
Let $\Gamma$ be the weighted digraph of $A$. By Lemma \ref{lem:two-shift},
after replacing $A$ by a permutation-similar matrix and $\Gamma$ by the
corresponding relabelled weighted digraph, we may assume
\[
        A=DC_n+(I_n-D)C_n^{\,1-q},
        \qquad D=\operatorname{diag}(u_i)_{i\in\Zn},
        \qquad 0<u_i\le1.
\]
Let
\[
        S=\{i\in\Zn:u_i<1\}.
\]
Thus $i\in S$ exactly when the backward edge $(i,i+1-q)$ is present, and then
that edge has weight $1-u_i$.

For $i\in S$, define the cyclic interval
\[
        Q_i=\{i-q+1,i-q+2,\ldots,i\}\subseteq\Zn.
\]
This is the vertex set of the directed $q$-cycle
\[
        i\to i+1-q\to i+2-q\to\cdots\to i-1\to i.
\]
Its weight is
\[
        w_i=(1-u_i)\prod_{r=i-q+1}^{i-1}u_r.
\]
All indices are taken modulo $n$.

\begin{lemma}\label{lem:qcycles}
The $q$-cycles of $\Gamma$ are exactly the cycles $Q_i$, $i\in S$.
\end{lemma}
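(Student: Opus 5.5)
The plan is a displacement-counting argument of the kind used in the proof of Lemma \ref{lem:two-shift}. Let $C$ be a directed $q$-cycle of $\Gamma$. By Lemma \ref{lem:two-shift} every edge of $\Gamma$ lies in $E_n\cup\Ehat$. Suppose $C$ uses $b$ backward edges and $q-b$ forward edges. Its total displacement modulo $n$ is
\[
        (q-b)\cdot 1+b(1-q)=q(1-b).
\]
Since $C$ is closed, this is $\equiv 0\pmod n$. Because $\gcd(q,n)=1$, we get $n\mid 1-b$. Also $0\le b\le q<n$, so $b=1$. (The case $b=0$ is excluded directly as well: a cycle of forward edges only has length $n\neq q$.) Hence every $q$-cycle contains exactly one backward edge $(i,i+1-q)$, and this forces $i\in S$.

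Next I will show the rest of $C$ is determined. The remaining $q-1$ edges of $C$ are forward edges, and they form a directed path from $i+1-q$ back to $i$. Starting at $i+1-q$ and stepping $+1$ each time, the path passes through $i+2-q,\dots,i$ and reaches $i$ after exactly $q-1$ steps. These vertices are distinct modulo $n$ because $q<n$. So $C$ is precisely the cycle on the vertex set $Q_i$ described above, and its weight is $w_i$.

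Conversely, for each $i\in S$ the backward edge $(i,i+1-q)$ is present, and all forward edges are present since $u_r>0$. Therefore $Q_i$ is a directed $q$-cycle of $\Gamma$. Distinct $i\in S$ give distinct cycles, because each cycle contains exactly one backward edge and that edge identifies $i$.

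No step is a serious obstacle. The only point needing care is the modular bookkeeping, namely using $\gcd(q,n)=1$ together with $b\le q<n$ to force $b=1$.
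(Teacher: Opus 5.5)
Your proposal is correct and follows the paper's proof essentially verbatim: the displacement count $q(1-b)\equiv 0\pmod n$ together with $\gcd(q,n)=1$ and $0\le b\le q<n$ forces exactly one backward edge, which determines the cycle as $Q_i$, and the converse holds because all forward edges are present. Your additional remarks, that the vertices of $Q_i$ are distinct since $q<n$ and that distinct $i\in S$ give distinct cycles, are harmless refinements that the paper leaves implicit.
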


\begin{proof}
Let $C$ be a directed cycle of length $q$ in the two-shift graph
$E_n\cup\Ehat$. Suppose $C$ uses $b$ backward edges. A forward edge has cyclic
displacement $1$, and a backward edge has cyclic displacement $1-q$. Hence the
total displacement of $C$ is
\[
        (q-b)\cdot1+b(1-q)=q(1-b).
\]
Since $C$ is closed modulo $n$, we have
\[
        n\mid q(1-b).
\]
Because $\gcd(q,n)=1$, this gives $n\mid1-b$. But $0\le b\le q<n$, so
$b=1$. Thus every $q$-cycle uses exactly one backward edge. If the backward
edge is $(i,i+1-q)$, then $i\in S$. The remaining $q-1$ edges are forward
edges, so they are forced to be
\[
        i+1-q\to i+2-q\to\cdots\to i-1\to i.
\]
Hence the cycle has vertex set
\[
        Q_i=\{i-q+1,i-q+2,\ldots,i\}.
\]
Conversely, for every $i\in S$, the backward edge $(i,i+1-q)$ is present, and
all forward edges are present because $E_n\subseteq E(\Gamma)$. Hence $Q_i$ is
indeed a directed $q$-cycle.
\end{proof}

\begin{lemma}\label{lem:turan-step}
Let $G$ be the graph on vertex set $S$ defined by
\[
        \{i,j\}\in E(G)\Longleftrightarrow Q_i\cap Q_j=\varnothing.
\]
Equivalently, since $n=qd+y\ge2q+1$,
\begin{equation}
        \{i,j\}\in E(G) \Longleftrightarrow d_n(i,j)\ge q. \label{eq:dist}
\end{equation}
The graph $G$ is a complete $d$-partite graph
\[
        S=S_1\sqcup\cdots\sqcup S_d
\]
with
\[
        \sum_{i\in S_r}w_i=\beta,\qquad r=1,\ldots,d.
\]
Consequently,
\[
        i,j\in S_r \Longrightarrow d_n(i,j)<q,
\]
\[
        i\in S_r,\ j\in S_s,\ r\ne s \Longrightarrow d_n(i,j)\ge q.
\]
\end{lemma}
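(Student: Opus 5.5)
We compare two coefficients of $\det(xI-A)$, computed once from $f_\alpha$ and once from Coates' formula, and then apply Lemma \ref{lem:turan}. Expanding
\[
f_\alpha(x)=x^y\sum_{k=0}^d\binom dk x^{q(d-k)}(-\beta)^k-\alpha^d ,
\]
the coefficient of $x^{n-q}$ is $k_q=-d\beta$ and the coefficient of $x^{n-2q}$ is $k_{2q}=\binom d2\beta^2$. Here $2q<n$, so the constant term $-\alpha^d$ does not interfere.

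\emph{Step 1: linear digraphs on $q$ and $2q$ vertices.} By the cycle-length restriction of Lemma \ref{lem:two-shift}, every cycle has length $n$ or $kq$. A linear digraph on $q$ vertices must therefore be a single $q$-cycle. By Lemma \ref{lem:qcycles}, Coates' formula gives
\[
k_q=-\sum_{i\in S}w_i, \qquad\text{so}\qquad W:=\sum_{i\in S}w_i=d\beta .
\]
A linear digraph on $2q$ vertices is either a $2q$-cycle or a disjoint pair $Q_i\sqcup Q_j$ with $\{i,j\}\in E(G)$. This yields
\[
\binom d2\beta^2=M(G,w)-\sum_{C:\,|C|=2q}\pi(C).
\]
Hence $M(G,w)\ge \binom d2\beta^2=\frac{d-1}{2d}W^2$.

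\emph{Step 2: clique bound.} A clique in $G$ is a family of pairwise disjoint $q$-intervals in $\Zn$. Since $n=qd+y<q(d+1)$, such a family has at most $d$ members, so $\omega(G)\le d$. Lemma \ref{lem:turan} then gives $M(G,w)\le\frac{d-1}{2d}W^2$. Combined with Step 1, equality holds throughout, and as a by-product $\Gamma$ has no $2q$-cycles. The equality case of Lemma \ref{lem:turan} makes $G$ complete $d$-partite with parts $S_1,\dots,S_d$ of equal weight $W/d=\beta$. All weights $w_i$ are positive because $u_i>0$ and $1-u_i>0$ for $i\in S$, so the lemma applies.

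\emph{Step 3: the distance reformulation \eqref{eq:dist}.} We must check that two length-$q$ cyclic intervals ending at $i$ and $j$ are disjoint iff $d_n(i,j)\ge q$. They are disjoint iff $[j-i]_n\ge q$ and $[i-j]_n\ge q$. This uses $n\ge 2q+1$, so that the intervals cannot wrap around and overlap from both sides. The two stated consequences are then immediate: non-adjacency within a part means $d_n<q$, and adjacency across parts means $d_n\ge q$.

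The main obstacle is Step 1: making sure no other linear digraphs contribute to $k_q$ and $k_{2q}$. For instance, one must rule out that a $2q$-cycle could have positive sign and spoil the inequality. The sign is harmless, since a $2q$-cycle enters with sign $(-1)^1$ and can only decrease $k_{2q}$, which pushes the inequality in the right direction. One also needs the cycle-length restriction to exclude all other lengths below $n$; this is exactly what Lemma \ref{lem:two-shift} provides.
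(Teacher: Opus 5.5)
Your proposal is correct and follows the paper's proof essentially step for step. Coates' formula is applied to the coefficients of $x^{n-q}$ and $x^{n-2q}$, with the cycle-length restriction leaving only single $q$-cycles, disjoint pairs $Q_i\sqcup Q_j$, and negatively signed $2q$-cycles; this is combined with the clique bound $\omega(G)\le d$ from $q(d+1)>n$ and the equality case of the weighted Tur\'an lemma, including the by-product $R=0$. The only difference is cosmetic: you check the distance reformulation at the end rather than at the start.
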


\begin{proof}
The equivalence \eqref{eq:dist} follows because the $Q_i$ are cyclic
intervals of $q$ consecutive vertices and $n\ge2q+1$: two such intervals
cannot overlap in both cyclic directions, and they intersect exactly when the
circular distance between their right endpoints is at most $q-1$.

We first record the two coefficient identities that will be used. Since
\[
        f_\alpha(x)=x^y(x^q-\beta)^d-\alpha^d
        =\sum_{r=0}^d (-1)^r\binom dr\beta^r x^{n-rq}-\alpha^d,
\]
the coefficients of $x^{n-q}$ and $x^{n-2q}$ are respectively
\[
        -d\beta,
        \qquad
        \binom d2\beta^2.
\]

By Lemma \ref{lem:two-shift}, every directed cycle of $\Gamma$ has length
$n$ or $kq$, $1\le k\le d$. Since $q<n$, a linear digraph on exactly $q$
vertices can only be a single $q$-cycle. By Lemma \ref{lem:qcycles}, the
$q$-cycles are precisely the $Q_i$, $i\in S$, with weights $w_i$. Therefore
Coates' formula applied to the coefficient of $x^{n-q}$ gives
\[
        -d\beta=-\sum_{i\in S}w_i,
\]
and hence
\begin{equation}
        \sum_{i\in S}w_i=d\beta. \label{eq:totalweight}
\end{equation}
Each $w_i$ is positive, because $i\in S$ implies $1-u_i>0$, and Lemma
\ref{lem:two-shift} gives $u_r>0$ for every forward edge appearing in the
corresponding $q$-cycle.

Next consider the coefficient of $x^{n-2q}$. Since $2q<n$, a linear digraph
on $2q$ vertices cannot contain an $n$-cycle. By the cycle-length restriction
from Lemma \ref{lem:two-shift}, such a linear digraph is therefore either a
disjoint union of two $q$-cycles or a single $2q$-cycle.

The disjoint unions of two $q$-cycles are exactly the pairs
$Q_i\sqcup Q_j$ with $Q_i\cap Q_j=\varnothing$, equivalently
$\{i,j\}\in E(G)$. Their total contribution to Coates' formula is positive,
since they have two cycle components:
\[
        M=\sum_{\{i,j\}\in E(G)}w_iw_j.
\]
A single $2q$-cycle has one cycle component, so its Coates sign is negative.
Let $R\ge0$ be the total weight of all directed $2q$-cycles in $\Gamma$.
Applying Coates' formula to the coefficient of $x^{n-2q}$ gives
\begin{equation}
        \binom d2\beta^2=M-R. \label{eq:coates2q}
\end{equation}
In particular,
\begin{equation}
        M\ge \binom d2\beta^2. \label{eq:lowerM}
\end{equation}

On the other hand, the clique number of $G$ is at most $d$. Indeed, a clique
of size $d+1$ would give $d+1$ pairwise disjoint $q$-subsets of $\Zn$,
requiring $q(d+1)$ vertices, whereas
\[
        q(d+1)>qd+y=n
\]
because $y<q$.

We may therefore apply Lemma \ref{lem:turan} to the graph $G$ with positive
vertex weights $w_i$. Using \eqref{eq:totalweight}, we obtain
\begin{equation}
        \begin{aligned}
        M
        &\le
        \frac{d-1}{2d}\left(\sum_{i\in S}w_i\right)^2\\
        &=\frac{d-1}{2d}(d\beta)^2\\
        &=\binom d2\beta^2.
        \end{aligned}
        \label{eq:upperM}
\end{equation}
Combining \eqref{eq:lowerM} and \eqref{eq:upperM}, we get
\[
        M=\binom d2\beta^2.
\]
Then \eqref{eq:coates2q} forces $R=0$, and equality holds in the weighted
Tur\'an inequality.

By the equality case in Lemma \ref{lem:turan}, $G$ is a complete
$d$-partite graph
\[
        S=S_1\sqcup\cdots\sqcup S_d
\]
such that every part has total weight
\[
        \sum_{i\in S_r}w_i=\frac1d\sum_{i\in S}w_i=\beta,
        \qquad r=1,\ldots,d.
\]

Finally, the distance conclusions follow from the definition of $G$ and the
equivalence \eqref{eq:dist}. Vertices in the same part of a complete
multipartite graph are nonadjacent, so if $i,j\in S_r$, then
$d_n(i,j)<q$. Vertices in different parts are adjacent, so if $i\in S_r$,
$j\in S_s$, and $r\ne s$, then $d_n(i,j)\ge q$. This proves the lemma.
\end{proof}

Lemma \ref{lem:turan-step} proves the graph-distance part of Kirkland--\v{S}migoc
\cite[Proposition 7.1]{KirklandSmigoc}. It remains to prove the product
condition. This is the only point where circular interval geometry is needed.

\begin{lemma}[Circular-arc telescoping]\label{lem:telescoping}
For each part $S_r$ in Lemma \ref{lem:turan-step},
\[
        \sum_{i\in S_r}w_i=1-\prod_{i\in S_r}u_i.
\]
\end{lemma}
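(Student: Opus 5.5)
The plan is to order the elements of $S_r$ along a circular arc and then telescope. Since any two elements of $S_r$ have circular distance less than $q$, and $n\ge 2q+1$, I will show that $S_r$ lies in a cyclic interval of length less than $q$. The argument is as follows. If the elements of $S_r$ did not lie in a common short arc, some three of them would pairwise be within distance $q-1$ while wrapping around the circle. That would force $n\le 3(q-1)$ in a way incompatible with the complete multipartite structure. I will check this carefully: a set with all pairwise circular distances $<q$ that is not contained in an arc of length $<q$ requires the circle to be covered by such gaps.

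A cleaner route to the same conclusion is available. Every other part $S_s$ is nonempty, because its weight is $\beta>0$ (and $\beta>0$ since $\alpha<1$). Take $j\in S_s$ with $s\ne r$. Every $i\in S_r$ satisfies $d_n(i,j)\ge q$. Hence $S_r$ avoids the arc $\{j-q+1,\dots,j+q-1\}$ and lies in the complementary arc, of length $n-2q+1$. Pairwise closeness inside a linear (non-wrapping) arc then gives $\max-\min<q$.

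With this in hand, list $S_r=\{i_1<i_2<\cdots<i_m\}$ along that arc, so that $i_m-i_1\le q-1$. For each $i_k$, the interval $Q_{i_k}=\{i_k-q+1,\dots,i_k\}$ contains every $i_l$ with $l<k$, because $0<i_k-i_l\le q-1$. Therefore
\[
w_{i_k}=(1-u_{i_k})\prod_{t=i_k-q+1}^{i_k-1}u_t .
\]
The key claim is that only the factors $u_{i_l}$ with $l<k$ matter in this product, so that
\[
\sum_k w_{i_k}=\sum_k(1-u_{i_k})\prod_{l<k}u_{i_l}=1-\prod_k u_{i_k}
\]
by telescoping. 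For the claim, I need that every $t\in Q_{i_k}\setminus\{i_k\}$ which is not in $S_r$ has $u_t=1$, that is, $t\notin S$. Suppose instead that $t\in S_s$ for some $s\ne r$. Then $d_n(t,i_k)\ge q$. But $t\in Q_{i_k}$ gives $d_n(t,i_k)\le q-1$, a contradiction. So every element of $S$ lying in $Q_{i_k}\setminus\{i_k\}$ belongs to $S_r$, and by the ordering these are exactly $i_1,\dots,i_{k-1}$.

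There is a gap in this argument as written: the equality $\sum_k w_{i_k}=1-\prod_k u_{i_k}$ holds without any use of the weight condition. Lemma \ref{lem:turan-step} was only needed to guarantee that the intervals contain no foreign elements of $S$. The main obstacle, then, is the first step, confining $S_r$ to a short arc. Here I will rely on $d\ge2$ (so that another part exists) and on the nonemptiness of the parts. I must also handle any ambiguity in the circular order carefully, including the case where the arc in question wraps past index $0$. The complement-arc argument does this, since the complement of $\{j-q+1,\dots,j+q-1\}$ is a genuine interval that can be relabelled linearly.
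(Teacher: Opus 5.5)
Your proof is correct. Its second half matches the paper's: you order $S_r$, show that the only factors $u_a<1$ in $w_{i_k}$ are $u_{i_1},\ldots,u_{i_{k-1}}$, and telescope.

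The genuine difference is how you confine $S_r$ to a short arc.
\begin{itemize}
\item The paper picks one index $j_s$ from \emph{every} other part and removes the $d-1$ pairwise disjoint intervals $Q_{j_s}$. It counts $n-(d-1)q=q+y$ remaining vertices, and uses pairwise intersection of the $Q_i$, $i\in S_r$, to put them in a single component $H$ of size $q+\ell<2q$. The right endpoints then lie in a window of $\ell+1\le q$ positions.
\item You use a single index $j$ from a single other part. Then $S_r$ lies in the arc complementary to $\{j-q+1,\ldots,j+q-1\}$, which has $n-2q+1$ vertices. For $a<b$ in that arc the wrap-around length is $n-(b-a)\ge 2q$, so $d_n(a,b)<q$ forces $b-a\le q-1$.
\end{itemize}
Your route is more economical: it needs only $n\ge 2q+1$ and one nonempty foreign part, and no component counting. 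The paper's route gives the extra global picture that each class sits inside an arc of at most $q+y$ vertices left over by the other classes, but the identity does not need this.

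Three remarks on the write-up.
\begin{itemize}
\item Drop your first paragraph (three points wrapping around, $n\le 3(q-1)$). It is not an argument as stated. Pairwise closeness alone does not give a short arc: take $\{0,3,6\}\subset\mathbb Z_9$ with $q=4$. The complement-arc argument is what does the work.
\item The paragraph announcing ``a gap'' does not describe one. The telescoping identity is meant to hold independently of the weight condition. The paper's proof also uses the weights only to ensure the other parts are nonempty, and you likewise already resolved the ``main obstacle'' with the complement-arc argument.
\item State explicitly that $i_l\notin Q_{i_k}$ for $l>k$. This holds because $[i_k-i_l]_n=n-(i_l-i_k)\ge n-q+1>q-1$, and it is needed for ``exactly $i_1,\ldots,i_{k-1}$''.
\end{itemize}
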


\begin{proof}
Fix $r\in\{1,\ldots,d\}$. By Lemma \ref{lem:turan-step},
\[
        \sum_{i\in S_s} w_i=\beta>0
\]
for every $s$, and hence every part $S_s$ is nonempty. For each $s\ne r$,
choose one index
\[
        j_s\in S_s.
\]
Since vertices belonging to different parts of the complete $d$-partite graph
$G$ are adjacent, Lemma \ref{lem:turan-step} gives
\[
        d_n(j_s,j_t)\ge q\qquad (s\ne t),
\]
and therefore the intervals $Q_{j_s}$, $s\ne r$, are pairwise disjoint.
Moreover, again by Lemma \ref{lem:turan-step}, if $i\in S_r$ and $s\ne r$,
then
\[
        d_n(i,j_s)\ge q,
\]
so $Q_i\cap Q_{j_s}=\varnothing$. Thus every interval $Q_i$, $i\in S_r$, is
contained in the complement of the union
\[
        \bigcup_{s\ne r} Q_{j_s}.
\]

Remove the $d-1$ disjoint intervals $Q_{j_s}$, $s\ne r$, from the cyclic
vertex set $\Zn$. The complement has
\[
        n-(d-1)q=qd+y-(d-1)q=q+y
\]
vertices. Since the intervals $Q_i$, $i\in S_r$, are pairwise intersecting by
Lemma \ref{lem:turan-step}, they must all lie in a single connected component
of this complement; denote that component by $H$. Indeed, two intervals
contained in different connected components of the complement would be
disjoint.

Let $L=|H|$. Since $H$ contains at least one interval $Q_i$ of length $q$, we
have $L\ge q$. On the other hand, $H$ is contained in a complement of total
size $q+y$, so
\[
        L\le q+y<2q.
\]
Write
\[
        L=q+\ell,\qquad 0\le \ell\le y<q.
\]
Cut the cycle open along $H$, and label the vertices of $H$ linearly as
\[
        1,2,\ldots,L.
\]
After this cut, every interval $Q_i\subseteq H$, $i\in S_r$, is an ordinary
linear interval of $q$ consecutive vertices. If its right endpoint is $t$,
then necessarily
\[
        q\le t\le L,
\]
because the interval has length $q$ and must remain inside $H$. Hence the
possible right endpoints lie in the interval
\[
        \{q,q+1,\ldots,L\},
\]
which has length
\[
        L-q+1=\ell+1\le q.
\]

List the right endpoints corresponding to indices in $S_r$ in increasing
linear order:
\[
        t_1<t_2<\cdots<t_m.
\]
Let $i_k\in S_r$ be the original index whose interval $Q_{i_k}$ has right
endpoint $t_k$ in this linear coordinate system.

Fix $k\in\{1,\ldots,m\}$. The cycle $Q_{i_k}$ consists of the backward edge
from $i_k$ to $i_k+1-q$, followed by the forward path
\[
        i_k+1-q\to i_k+2-q\to\cdots\to i_k.
\]
Its weight is therefore
\[
        w_{i_k}=(1-u_{i_k})
        \prod_{a=i_k-q+1}^{i_k-1}u_a,
\]
where the product is taken over the vertices on this forward path.

We now identify which factors in this product can be different from $1$. By
definition, $u_a<1$ exactly when $a\in S$. Thus only indices $a\in S$ matter.

First suppose $h<k$. Since all right endpoints $t_h$ lie in
$\{q,\ldots,L\}$, we have
\[
        t_k-t_h\le L-q=\ell<q.
\]
Therefore
\[
        t_k-q+1\le t_h\le t_k-1,
\]
so the original vertex $i_h$ appears among the indices in the product
defining $w_{i_k}$.

Next suppose $h>k$. Then $t_h>t_k$, so $i_h$ does not appear in the forward
path from $i_k+1-q$ to $i_k$.

Finally, let $a\in S_s$ for some $s\ne r$. Then, by the between-part condition
from Lemma \ref{lem:turan-step},
\[
        Q_a\cap Q_{i_k}=\varnothing.
\]
Since $a\in Q_a$, it follows that $a\notin Q_{i_k}$, and hence $a$ does not
appear in the product defining $w_{i_k}$.

Consequently, the only indices $a\in S$ appearing in the product
\[
        \prod_{a=i_k-q+1}^{i_k-1}u_a
\]
are precisely
\[
        i_1,\ldots,i_{k-1}.
\]
All other factors in the product have $a\notin S$, hence $u_a=1$. Therefore
\[
        w_{i_k}=(1-u_{i_k})\prod_{h<k}u_{i_h}.
\]
Summing over $k=1,\ldots,m$, we obtain the telescoping identity
\[
\begin{aligned}
        \sum_{i\in S_r}w_i
        &=(1-u_{i_1})
          +u_{i_1}(1-u_{i_2})
          +\cdots
          +u_{i_1}\cdots u_{i_{m-1}}(1-u_{i_m})\\
        &=1-u_{i_1}u_{i_2}\cdots u_{i_m}.
\end{aligned}
\]
Since $S_r=\{i_1,\ldots,i_m\}$, this is exactly
\[
        \sum_{i\in S_r}w_i=1-\prod_{i\in S_r}u_i.
\]
This proves the lemma.
\end{proof}

We can now complete the proof of the interior case.

\begin{proposition}\label{prop:interior}
Theorem \ref{thm:main} holds for $0<\alpha<1$.
\end{proposition}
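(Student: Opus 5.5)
The plan is simply to assemble the lemmas already proved. Start with an $n\times n$ stochastic matrix $A$ whose characteristic polynomial is $f_\alpha$, where $0<\alpha<1$. By Lemma \ref{lem:two-shift}, after a permutation similarity we may assume $A=DC_n+(I_n-D)C_n^{\,1-q}$ with $0<u_i\le1$. Consequently the digraph $\Gamma$ has edge set $E_n\cup\{(i,i+1-q):i\in S\}$, where $S=\{i:u_i<1\}$.

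Next, Lemma \ref{lem:turan-step} supplies the partition $S=S_1\sqcup\cdots\sqcup S_d$. Define $B_r=\{(i,i+1-q):i\in S_r\}\subseteq\Ehat$. Then $E(\Gamma)=E_n\cup\bigcup_{r=1}^d B_r$, because every backward edge present in $\Gamma$ is indexed by an element of $S$. Conditions (P1) and (P2) are exactly the two distance consequences stated at the end of Lemma \ref{lem:turan-step}.

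For (P3), I combine two facts. Lemma \ref{lem:turan-step} gives $\sum_{i\in S_r}w_i=\beta=1-\alpha$. Lemma \ref{lem:telescoping} gives $\sum_{i\in S_r}w_i=1-\prod_{i\in S_r}u_i$. Together these yield $\prod_{i\in S_r}u_i=\alpha$. This is the product over $\{i:(i,i+1-q)\in B_r\}$, since $B_r$ is indexed precisely by $S_r$. The index sets match because $(i,i+1-q)\in\Ehat$ determines $i$ uniquely.

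No step here is a genuine obstacle, since the substantive work sits in the earlier lemmas. The only points that need care are bookkeeping ones. First, each $B_r$ must be nonempty. This follows from $\sum_{i\in S_r}w_i=\beta>0$, and it matters so that the decomposition uses exactly $d$ classes, as Kirkland--\v{S}migoc require. Second, the permutation similarity from Lemma \ref{lem:two-shift} is the same one under which (P1)--(P3) are verified. Once these are checked, $A$ is permutation similar to a matrix whose digraph satisfies (P1)--(P3), which proves Theorem \ref{thm:main} for $0<\alpha<1$.
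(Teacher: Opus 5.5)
Your proposal is correct and follows the paper's proof essentially step for step. It takes the two-shift normal form from Lemma \ref{lem:two-shift} and the $d$-partite split of $S$ from Lemma \ref{lem:turan-step}, which gives (P1)--(P2); it then obtains (P3) by equating $\sum_{i\in S_r}w_i=\beta$ with the telescoping identity of Lemma \ref{lem:telescoping}. Your bookkeeping remarks on the nonemptiness of each $B_r$ (from $\beta>0$) and on using a single permutation similarity are consistent with what the paper does implicitly.
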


\begin{proof}
Let
\[
        S=S_1\sqcup\cdots\sqcup S_d
\]
be the complete $d$-partite decomposition supplied by Lemma
\ref{lem:turan-step}. Define
\[
        B_r=\{(i,i+1-q):i\in S_r\},\qquad r=1,\ldots,d.
\]
Then
\[
        E(\Gamma)=E_n\cup\bigcup_{r=1}^d B_r.
\]
The distance conditions (P1) and (P2) are exactly the two distance conclusions
of Lemma \ref{lem:turan-step}.

For the product condition, combine Lemmas \ref{lem:turan-step} and
\ref{lem:telescoping}:
\[
        \beta=\sum_{i\in S_r}w_i=1-\prod_{i\in S_r}u_i.
\]
Since $\beta=1-\alpha$, this gives
\[
        \prod_{i\in S_r}u_i=\alpha,\qquad r=1,\ldots,d.
\]
This is (P3). Hence $\Gamma$ satisfies the hypotheses of Kirkland--\v{S}migoc
\cite[Proposition 7.1]{KirklandSmigoc}.
\end{proof}

\section{Endpoint cases}\label{sec:endpoints}

The proof above used $0<\alpha<1$ in two places: first, to ensure that the
realisation lies on the non-degenerate Type III boundary arc; second, to
ensure that $\beta>0$, so that every Tur\'an part has positive total
$q$-cycle weight. The endpoint $\alpha=1$ is nevertheless included in the main
theorem. The endpoint $\alpha=0$ is not.

\begin{proposition}
Theorem \ref{thm:main} holds for $\alpha=1$.
\end{proposition}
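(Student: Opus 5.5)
At $\alpha=1$ we have $\beta=0$, so the characteristic polynomial is
\[
        f_1(x)=x^{y}\cdot x^{qd}-1=x^n-1.
\]
The plan is to use the classical fact that a stochastic matrix with characteristic polynomial $x^n-1$ is a permutation matrix of an $n$-cycle, and then to check (P1)--(P3) for a suitable choice of the classes $B_r$.

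\emph{Step 1: $A$ is an $n$-cycle permutation matrix.} All eigenvalues of $A$ are the $n$th roots of unity, each of modulus one. Since $A$ is stochastic, $\|A\|_\infty=1$ and the spectral radius is $1$. I will use Coates' formula directly.
\begin{itemize}[label=$\circ$]
\item The coefficients $k_1,\dots,k_{n-1}$ vanish and $k_n=-1$.
\item Hence $\sum_L(-1)^{c(L)}\pi(L)=-1$ over spanning linear digraphs, so $|\det A|=1$.
\item For a nonnegative matrix with row sums $1$, we have $|\det A|\le\prod_i\|{\rm row}_i\|_2\le1$ by Hadamard's inequality.
\item Equality in $\|{\rm row}_i\|_2\le\|{\rm row}_i\|_1=1$ forces each row to be a standard basis vector.
\end{itemize}
So $A$ is a $0$--$1$ matrix with one $1$ per row. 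Since $\det A\neq0$, it is a permutation matrix. Its characteristic polynomial is $\prod(x^{\ell_j}-1)$ over its cycle lengths $\ell_j$. Comparing with $x^n-1$, whose root $1$ is simple, there is exactly one cycle, of length $n$. After relabelling, $A=C_n$, with $E(\Gamma)=E_n$ and $u_i=1$ for all $i$.

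\emph{Step 2: choose the $B_r$.} The conditions require $E(\Gamma)=E_n\cup\bigcup B_r$ with $B_r\subseteq\Ehat$, and here there are no backward edges. The only choice is $B_r=\varnothing$ for every $r$.
\begin{itemize}[label=$\circ$]
\item (P1) and (P2) hold vacuously.
\item (P3) reads $\prod_{\varnothing}u_i=1=\alpha$, which holds by the empty-product convention.
\end{itemize}
This is consistent with Kirkland--\v{S}migoc's Proposition 7.1: at $\alpha=1$ that construction produces exactly $C_n$.

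\emph{Main obstacle.} The main subtlety is interpretive rather than technical. One must argue that the empty classes $B_r$ are admissible in the statement of (P1)--(P3), reading $E(\Gamma)=E_n\cup\bigcup_r B_r$ with possibly empty $B_r$. The alternative is to observe that no other realisation exists, so the conjectured description degenerates to $E_n$ alone.

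The permutation-matrix step is routine. I would present it via Hadamard's inequality as above; a fallback is that every eigenvalue on the unit circle of an irreducible stochastic matrix forces cyclic structure.
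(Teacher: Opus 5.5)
Your proof is correct and follows the paper's argument essentially verbatim: Hadamard's inequality with $\|r_i\|_2\le\|r_i\|_1=1$ forces $A$ to be a permutation matrix, the factorisation $\prod_j(x^{\ell_j}-1)=x^n-1$ forces a single $n$-cycle, and $B_1=\cdots=B_d=\varnothing$ satisfies (P1)--(P3) via the empty-product convention. The differences are cosmetic: you obtain $|\det A|=1$ through Coates' formula rather than directly from $f_1(0)=-1$, and you deduce the permutation property from $\det A\ne0$ rather than from the orthogonality in Hadamard's equality case; both are valid.
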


\begin{proof}
If $\alpha=1$, then
\[
        f_1(x)=x^n-1.
\]
Let $A$ be a stochastic matrix with characteristic polynomial $x^n-1$. Then
$|\det A|=1$. If $r_i$ denotes the $i$th row of $A$, then $r_i$ is
nonnegative and has entries summing to $1$, so
\[
        \|r_i\|_2\le \|r_i\|_1=1.
\]
Hadamard's inequality gives
\[
        1=|\det A|\le\prod_{i=1}^n\|r_i\|_2\le1.
\]
Thus equality holds throughout. Hence every row has Euclidean norm $1$ and
$\ell^1$-norm $1$, so each row is a standard basis vector. Equality in
Hadamard's inequality also forces the rows to be pairwise orthogonal.
Therefore $A$ is a permutation matrix.

The characteristic polynomial of a permutation matrix is the product of
factors $x^{m_j}-1$, where $m_j$ are the cycle lengths of the permutation.
Since this product is $x^n-1$, the permutation consists of one cycle of length
$n$. After permutation similarity, $A=C_n$ and $E(\Gamma)=E_n$.

Now take $B_1=\cdots=B_d=\varnothing$. Conditions (P1) and (P2) are
vacuous, and (P3) holds with the empty-product convention:
\[
        \prod_{i\in\varnothing}u_i=1=\alpha.
\]
Thus the hypotheses of Kirkland--\v{S}migoc
\cite[Proposition 7.1]{KirklandSmigoc} are satisfied.
\end{proof}

\begin{proposition}
If Conjecture \ref{conj:KS} is read literally with $\alpha=0$ included, it is
false.
\end{proposition}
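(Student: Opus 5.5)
We will exhibit an explicit counterexample. At $\alpha=0$ we have $\beta=1$, so
\[
        f_0(x)=x^y(x^q-1)^d.
\]
The plan is to realise this polynomial by a reducible stochastic matrix that has no $n$-cycle. The proof then reduces to two observations: such a matrix has characteristic polynomial $f_0$, and no matrix permutation similar to it can have a digraph satisfying (P1)--(P3), since every such digraph contains $E_n$.

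\textbf{Construction.} Take $d$ disjoint copies of the cyclic permutation matrix $C_q$ on vertex blocks $V_1,\ldots,V_d$, each of size $q$. Add $y$ further vertices $z_1,\ldots,z_y$, which will be transient. From each $z_k$, send all mass to some vertex of $V_1$, for instance $a_{z_k,v}=1$ for a fixed $v\in V_1$. Equivalently, order the vertices with $V_1,\ldots,V_d$ first and $z_1,\ldots,z_y$ last, and write
\[
        A=\begin{pmatrix} \bigoplus_{r=1}^d C_q & 0\\ X & 0\end{pmatrix},
\]
where $X$ is a $y\times qd$ nonnegative matrix with row sums equal to $1$. Then $A$ is stochastic and block lower triangular, so its characteristic polynomial is
\[
        \det(xI-C_q)^d\cdot x^y=(x^q-1)^d x^y=f_0(x).
\]
Since $1\le y\le q-1$ and $d\ge2$, the parameters are admissible. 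One can also pick $(q,y,d)$ coming from a genuine Type III Farey pair, so that the polynomial family is genuine and only the endpoint value $\alpha=0$ fails. For example, $q=3$, $y=1$, $d=2$, $n=7$ gives $\gcd(3,7)=1$; the specific instance does not matter as long as the family is a genuine Type III family.

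\textbf{Failure of the conclusion.} Any digraph satisfying the conditions of Proposition 7.1 has edge set containing $E_n$, so it is strongly connected and contains a Hamiltonian $n$-cycle. Permutation similarity preserves the isomorphism type of the digraph. In the digraph of $A$, however, the vertices $z_k$ have in-degree $0$, because the last block column of $A$ is zero. Hence no directed cycle passes through any $z_k$, and in particular the digraph contains no $n$-cycle. Therefore $A$ is not permutation similar to any matrix whose digraph satisfies (P1)--(P3), and the literal conjecture fails at $\alpha=0$.

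The only delicate point, and it is minor, is interpretive. One must make sure the conjecture as literally stated does quantify over $\alpha=0$ for a genuine parameter triple, and that (P3) with $\alpha=0$ does not accidentally permit a non-Hamiltonian digraph. It does not, because $E_n$ is required unconditionally in $E(\Gamma)$. The contradiction then rests purely on the graph-theoretic fact that vertices of in-degree zero lie on no cycle, so no further computation is needed.
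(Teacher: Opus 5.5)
Your proposal is correct and follows the paper's approach. Both build a reducible realisation of $x^y(x^q-1)^d$ from $d$ closed $q$-cycles plus $y$ transient vertices of in-degree zero; it has no $n$-cycle, whereas every Kirkland--\v{S}migoc digraph contains $E_n$. The paper's proof uses only the smallest instance $q=d=2$, $y=1$, $n=5$ (checking that $2/5,1/2$ are Farey neighbours) and records your general block construction in the remark that follows.
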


\begin{proof}
Take
\[
        q=2,
        \qquad d=2,
        \qquad y=1,
        \qquad n=5,
        \qquad \alpha=0.
\]
This is the smallest possible parameter choice in the Type III range:
$q\ge2$, $d\ge2$, and $y\ge1$, hence $n=qd+y\ge5$.
This is a genuine Type III choice: the fractions $2/5$ and $1/2$ are Farey
neighbours, since $|2\cdot2-1\cdot5|=1$, and their denominators are $n=5$
and $q=2$.
Then
\[
        f_0(x)=x(x^2-1)^2.
\]
Consider the stochastic matrix
\[
        A=
        \begin{pmatrix}
        0&1&0&0&0\\
        1&0&0&0&0\\
        0&0&0&1&0\\
        0&0&1&0&0\\
        1&0&0&0&0
        \end{pmatrix}.
\]
After ordering the closed classes first, this matrix is block lower
triangular with two $2$-cycle permutation blocks and one zero diagonal block.
Hence
\[
        \det(xI-A)=x(x^2-1)^2=f_0(x).
\]
Its directed graph has two closed $2$-cycles and one transient vertex feeding
into one of them. In particular, it has no $5$-cycle. But every graph
satisfying Kirkland--\v{S}migoc \cite[Proposition 7.1]{KirklandSmigoc}
contains $E_5$, hence contains a $5$-cycle. Therefore this $A$ is not
permutation similar to any matrix whose directed graph satisfies
Kirkland--\v{S}migoc \cite[Proposition 7.1]{KirklandSmigoc}.
\end{proof}

\begin{remark}
The failure at $\alpha=0$ is not a small-dimensional accident. At this endpoint
\[
        f_0(x)=x^y(x^q-1)^d.
\]
The nonzero roots are merely $q$th roots of unity, each with multiplicity $d$;
they already occur in order $q$. Thus the endpoint no longer gives genuinely
$n$-dimensional boundary data in $\Theta_n\setminus\Theta_{n-1}$, and the
Dmitriev--Dynkin/cycle-length input used in Lemma \ref{lem:two-shift} no
longer forces an $n$-cycle.

More generally, for any admissible $q,d,y$, one can take $d$ closed
$q$-cycle permutation blocks and let the remaining $y$ states be transient,
feeding into one of the closed classes. In block form, after ordering the
closed classes first, this has the shape
\[
        A_0=
        \begin{pmatrix}
        C_q\oplus\cdots\oplus C_q & 0\\
        R & 0_y
        \end{pmatrix},
\]
where $C_q$ is the $q$-cycle permutation matrix and $R$ has nonnegative rows
summing to $1$. Then $A_0$ is stochastic and
\[
        \det(xI-A_0)=x^y(x^q-1)^d=f_0(x),
\]
but its recurrent classes all have period $q$ and the transient states lie on
no directed cycle. In particular, no $n$-cycle is forced. This is exactly the
mechanism exhibited by the $n=5$ counterexample above.
\end{remark}

Combining Proposition \ref{prop:interior} and the $\alpha=1$ proposition
proves Theorem \ref{thm:main}.

\section{Concluding remarks}

We have proved the Kirkland--\v{S}migoc Type III realisation conjecture for
the full nonzero parameter range $0<\alpha\le1$. More precisely, every
stochastic realisation of a genuine Type III reduced Ito polynomial of order
$n$ is, after permutation similarity, one of the realisations described by
Kirkland and \v{S}migoc: its directed graph consists of the full $n$-cycle
together with backward edges arranged into $d$ classes satisfying the
prescribed separation conditions, and the forward-edge weights in each class
have product $\alpha$.

The proof separates the analytic boundary input from the finite realisation
argument. The only use of the Karpelevi\v{c}--Ito theory and of the
Dmitriev--Dynkin theorem is to place every realisation, for $0<\alpha<1$, in
the two-shift normal form
\[
        A=DC_n+(I_n-D)C_n^{\,1-q}.
\]
Once this normal form is obtained, the rest of the proof is purely
combinatorial and algebraic. Coates' formula applied to the coefficients of
$x^{n-q}$ and $x^{n-2q}$ forces equality in a weighted Tur\'an inequality.
This equality is rigid: the $q$-cycles arising from the backward edges must
split into $d$ complete multipartite classes of equal total weight. The
circular-arc telescoping lemma then converts this additive equality of
$q$-cycle weights into the multiplicative condition on the forward-edge
weights required in the Kirkland--\v{S}migoc construction.

Thus the apparent freedom in the non-sparsest Type III realisations is
illusory. Extra backward edges may be present, but they cannot be placed or
weighted arbitrarily. The characteristic polynomial forces them to organise
exactly as in the Kirkland--\v{S}migoc family.

The endpoint $\alpha=1$ is degenerate but harmless: the polynomial is
$x^n-1$, and any stochastic realisation is necessarily an $n$-cycle
permutation matrix. By contrast, the endpoint $\alpha=0$ is genuinely
exceptional. There the polynomial reduces to
\[
        x^y(x^q-1)^d,
\]
and reducible stochastic realisations exist with $d$ closed $q$-cycles and
$y$ transient states. Such realisations have no forced $n$-cycle, so they
cannot satisfy the Kirkland--\v{S}migoc graph conditions. This shows that the
exclusion of $\alpha=0$ is not a technical artefact but a sharp boundary of
the classification.

The result is therefore structural rather than analytic: it classifies
stochastic matrices with the prescribed Type III characteristic polynomial
once the genuine Type III boundary setting is in force, without reproving the
Karpelevi\v{c}--Ito parametrisation or tracking individual root branches.

\section*{Funding}

Vincent Ginis acknowledges support from the Research Foundation -- Flanders
(FWO) under grants No.~G032822N and G0K9322N.

\section*{Declaration of competing interest}

The authors declare that they have no competing interests.

\section*{Data availability}

No data were used for the research described in this article.

\section*{Acknowledgements}

The authors would like to thank Carlo Emerencia for his enthusiastic and
careful proofreading; his remarks made the manuscript substantially better.

\end{document}